\title{Adaptive Strategies for The Open-Pit Mine\\ Optimal Scheduling Problem} 
\author{Michel \textsc{De Lara}\footnote{%
Universit\'e Paris--Est, \textsc{CERMICS}, 
 6--8 avenue Blaise Pascal, 77455 Marne la Vall\'ee Cedex 2, France. 
Corresponding author:  delara@cermics.enpc.fr, fax +33164153586}, 
Nelson \textsc{Morales}\footnote{\textsc{Delphos} Mine Planning Laboratory,
Advanced Mining Technology Center, University of Chile}, 
Nathana\"el \textsc{Beeker}\footnote{%
Universit\'e Paris--Est, \textsc{CERMICS}, 
 6--8 avenue Blaise Pascal, 77455 Marne la Vall\'ee Cedex 2, France.} }
\newtheorem{proposition}{Proposition}
\newenvironment{proof}{\small{\bf Proof.}}{\hfill$\Box$\normalsize
\bigskip}
\def\mathscr{\EuScript}
\newcommand{\tribu}[1]{\mathscr{#1}}                        
\newcommand{\borel}[1]{\tribu{B}_{#1}^{\mathrm{o}}}         
\newcommand{\BLOCKSET}{\mathcal{B}}
\newcommand{\IR}{\mathbb{R}}
\newcommand{\IN}{\mathbb{N}}
\newcommand{\ARCSET}{\mathcal{A}}
\newcommand{\RESSET}{\mathcal{R}}
\newcommand{\DISCOUNTRATE}{\rho}
\newcommand{\INDEX}{{\tt idx}}
\newcommand{\byvar}[1]{\mathtt{y}_{#1}}
\newcommand{\atvar}[1]{\tiny{\Delta} \byvar{#1}}
\newcommand{\TSET}{\mathbf{T}}
\newcommand{\OPBSP}{{\tt OPBSP}}
\newcommand{\NPV}{\mathtt{NPV}}
\def\RR{{\mathbb R}} 
\def\EE{{\mathbb E}}
\def\PP{{\mathbb P}}
\def\UU{{\mathbb U}}
\def\BB{{\mathbb B}}
\def\XX{{\mathbb X}}
\def\WW{{\mathbb W}}
\def\CC{{\mathbb C}}
\def\defegal{:=} 
\def\text#1{\quad\mbox{#1}\quad} 
\def\mtext#1{\,\mbox{#1}\,} 
\def\Gittins{^{G}}
\def\opt{^{\star}}
\def\upper{_{\sharp}}
\def\1{{\mathbf 1}}
\def\horizon{T}
\def\state{x}
\def\State{X}
\def\control{c}
\def\CONTROL{\CC}
\def\uncertainty{\omega}
\def\Uncertain{\WW}
\def\UNCERTAIN{\Omega}
\def\DYNAMICS{F}
\def\dec{\BB} 
\def\sdo{\mathbb A} 
\def\capacity{v}
\def\discount{\rho}
\def\horizontal{c}
\def\HORIZONTAL{C}
\def\setcolumns{\CC}
\def\depth{d}
\def\DEPTH{D}
\def\surface{\horizontal}
\def\neighbors{\mathcal{M}}
\def\numb{N} 
\def\block{b}
\def\setblocks{\mathcal{B}} 
\def\gain{\mathtt{J}}
\def\setpolicies{\mathcal{P}^{ad}} 
\def\policy{\mathtt{P}}
\def\price{Price}
\def\amount{Ore}
\def\worth{w}
\def\info{\mathcal{I}}
\def\deci{u}
\def\setdecis{\UU}
\def\cost{Cost}
\def\nba{l} 
\def\norme#1{\|#1\|}
\def\rent{\mathtt{V}} 
\def\VALUE{\mathbb{V}}
\def\Value{\mathtt{J}}
\def\retirement{\infty}
\def\HISTORY{{\mathbb H}}
\begin{document}
\maketitle

\begin{abstract}
 Within the mining discipline, mine planning is the component that
studies how to transform the
information about the ore resources into value for the owner.
For open-pit mines, an optimal block scheduling maximizes the
discounted value of the extracted blocks (period by period), called the
\emph{net present value} (NPV). However, 
to be feasible, a mine schedule must respect the slope constraints. 
The optimal {\em open-pit block scheduling problem}
(\OPBSP) consists, therefore, in finding such an optimal schedule.
On the one hand, we introduce the dynamical optimization approach to 
mine scheduling in the deterministic case, and we propose a
class of (suboptimal) adaptive strategies, 
the so-called \emph{index strategies}. We show
that they provide upper and lower bounds for the $\NPV$,
and we provide numerical results.
On the other hand, we introduce a theoretical framework for
\OPBSP\ under uncertainty and learning. 
\end{abstract}

\textbf{Keywords:} mine planning, open-pit block scheduling problem,
  optimization, index strategies, uncertainty, learning.

\section{Introduction} 
\label{sec:intro}

Within the mining discipline, mine planning is the component that
studies how to transform the
information about the ore resources into value for the owner. Among
the first decisions taken in the mine planning process is the choice of
an exploitation method: it can be open-pit, that is achieved by
digging from the surface, or it can be underground mining, that is done by
constructing shafts and tunnels to access the mineralized zones.
Other relevant products of the planning process are the {\em
production plan}, that indicates how much will be produced at each
time period, and the {\em mine scheduling}, that backs up the
production plan by specifying what parts of the mine will be extracted
in order to reach the production.
A mine scheduling is constructed by means of a {\em block model},
which is a partition of the terrain into a 3-dimensional array of
regular blocks.  For each block, geostatisticians construct
estimations on the different parameters like ore content, density, etc.
The block model is considered an input to the mine planning
process. 

The operation of a mine is constrained by the overall capacity of
transportation, which is translated into a number of tons per period
(for example, a number of tons per day) and therefore in the number of
blocks that can be extracted from the mine. Similarly, the overall
tonnage of blocks for processing is also bounded by the plant
processing capacity. Notice that, in the case of open-pit mines, not
all blocks qualify for processing as an important part of the blocks
may not contain enough material to have revenue but must be extracted
in order to access attractive blocks.

Open-pit mines are also ``special'' in the sense that extraction
must respect slope constraints: in order to reach blocks
by digging from the surface, there is a minimum set of blocks that
have to be extracted before. Indeed, the shape of the pit must be such that the
stability of the walls and the accessibility are possible. This
translates into a set of {\em precedence} constraints between the
blocks.
Other additional constraints to the operation of the mine may include
{\em blending} constraints, which limit the average value of processed
blocks for a certain attribute (like rock hardness or pollutant
contents).

Considering all these elements, a {\em mine scheduling} can be seen as a
(non injective) mapping from the set of blocks towards the time
periods. Several blocks share the same extraction time. 
An optimal block scheduling maximizes the
discounted value of the extracted blocks (period by period), called the
\emph{net present value} ($\NPV$). However, 
to be feasible, a mine schedule must respect the capacity,
blending and slope constraints. The optimal
{\em open-pit block scheduling problem}
(\OPBSP) consists, therefore, in finding such an optimal block
scheduling.

Related to block scheduling, and central in this article,
is the notion of {\em block sequence}. A block sequence is a total
order on the set of blocks, such that a larger rank means a later
extraction (due to precedence constraints). Block sequences can be easily
converted into block schedules by grouping blocks so that the overall
capacities and blending constraints are satisfied (or, equivalently,
replacing the slope constraints by the precedences given by the
sequence).

The \OPBSP\ is mostly formulated in a {\em deterministic} setting, where 
all values are
supposed to be known to the planner \emph{before} the planning phase:
block model, prices and the operation of the mine (no failure).
The traditional approach to optimal \OPBSP\ uses Binary Integer
Programming (see Appendix~\ref{sec:combinatorial}).
A very general formulation of \OPBSP{} is due to Johnson
\citep*{Joh68,Joh69}, which presented the problem of block scheduling
under slope, capacity and blending constraints (the last ones given by
ranges of the processed ore grade) within a multi-destination setting
(that is, the optimization procedure yields as an output the
process to apply 
to a given block). Unfortunately, the computational capabilities at
the time made impossible to solve the formulation of Johnson for
realistic case studies.
Alternatively to the work of Johnson, Lerchs and Grossman
\citep*{LeGr65} proposed a very simplified version of \OPBSP{} in
which block destinations are fixed in advance and the only constraint
considered is the slope constraint, that is, the problem reduces to
select a subset of blocks such that the contained value is maximized
while the precedence constraint induced by the slope angles are held.
This problem is known as the {\em ultimate pit} or {\em final pit}
problem.  Lerchs and Grossman also presented two key results: 
i) an efficient algorithm for solving the ultimate pit problem;
ii) reducing the economic value of any given block makes the
  optimal solution of the ultimate pit problem to {\em shrink} (that
  is, if the values of the blocks decrease, the new solution is a
  subset of the original one).
These two properties allow to produce nested pits and therefore, by
trial and error, to introduce time and look for block sequences that
satisfy other constraints like capacity. 
More detailed reviews can be found in \citep*{NRW+10} (for a
broad survey on operations research in mining) and 
\citep*{CEG+11} (for the specific case of open-pit).
Finally, an approach closer to the one that will be taken in this
article is due to 
\citep*{Goodwin-Seron-Middleton-Zhang-Hennessy-Stone-Menabde:2006}
which abstract the mine as a set of columns and embed the problem in
the context of control theory.

Regarding mine planning under {\em uncertainty}, since the beginning of the
nineties, an increasing number of open-pit mining strategies with
uncertainty have been developed, following two articles by Ravenscroft
\citep*{Raven92}, and Denby and Schofield \citep*{DS95}. The first one
presents the \emph{conditional simulation}, which is a technique used,
for a mine with a known distribution, to generate sets of equally
probable profiles called \emph{scenarios}. We shall not dwell on the
issue of the design of statistical models  of ore distribution with
uncertainty, using geostatistical tools such as \emph{kriging}  or
others \citep*{Krige84,Jour83,Dowd89,Laj90,Sichel95}, and their
simulation.  The Denby and Schofield \citep*{DS95} paper explains how
to include uncertainty in a genetic algorithm, without precisely
fixing the probabilistic frame. 
Since almost two
decades, most of the stochastic models are based on the Ravenscroft
approach, and present heuristics using a predefined set of scenarios.
Dimitrakopoulos has been one of the driving force behind this trend,
and has developped a large number of scenario-based strategies
\citep*{GD04,DR04,DMR07,DR08}. The solution is generally searched as a
planning, that is an open-loop strategy: we have to plan and apply the
entire scheduling without modifying it in the process of extraction,
even if we get more information on the profile by discovering the
exact value of the blocks.  
Golamnejad, Osanloo, and Karimi \citep*{GOK06} and Boland, Dumitrescu
and Froyland \citep*{BDF08} have also developped scenario-based
strategies with a well defined mathematical and probabilistic
framework: stochastic programming on a scenario tree. This allows
solutions to be defined on a tree rather than only on a line (time), which
clearly is an improvement.
We are interested in how the mine scheduling optimization problem is
formulated and possibly solved under uncertainty.  
We aim at designing solutions as \emph{adaptive} strategies.

The paper is organized as follows, where our objectives are twofold.
On the one hand, we introduce in Section~\ref{sec:dynamical-approach}
the dynamical optimization approach to 
mine scheduling in the deterministic case. 
In Section~\ref{sec:indices}, we propose a
class of (suboptimal) adaptive strategies  
to attack the optimal \OPBSP, the so-called \emph{index strategies}. We show
that they provide upper and lower bounds for the $\NPV$.
We provide numerical results  in Section~\ref{sec:results}.
On the other hand, we introduce in
Section~\ref{sec:uncertainty-framework}  a theoretical framework for
\OPBSP\ under uncertainty and learning.

\section{The dynamical approach to open-pit block scheduling}
\label{sec:dynamical-approach}

As in \citep*{Goodwin-Seron-Middleton-Zhang-Hennessy-Stone-Menabde:2006},
we define the mine state as a collection of pit depths at a certain
number of surface locations and we represent the evolution
of this state via a dynamic model that uses mining action as
control input.
In this setting, an admissible profile is one that respects
local angular constraints at each point, and the
open-pit mine optimal scheduling problem consists in finding a 
sequence of blocks and admissible profiles which maximizes 
the intertemporal discounted extraction profit. 

\subsection{A state control dynamical model}
\label{ss:state}

To simplify the description of the algorithms in this section, we will
identify the blocks by vertical position
$\depth\in\{1,\ldots,\DEPTH\}$ ($\depth$ for depth) and by its
horizontal position $\horizontal\in\setcolumns$ ($\horizontal$ for
column).  In the sequel, it will also be convenient to see the mine as
a collection of columns $\setcolumns$ of cardinal $\HORIZONTAL$
indexed by $\horizontal$, each column containing $\DEPTH$ blocks.
We assume that blocks are extracted sequentially under the following
hypothesis:
\begin{itemize}
\item it takes one time unit to extract one block (thus, the time unit is different from the one in Appendix~\ref{sec:combinatorial});
\item only blocks at the surface may be extracted;
\item a block cannot be extracted if the slope made with its
 neighbors is too high, 
due to geotechnical constraints on mine wall slopes;
\item a retirement option is available where no block is extracted.
\end{itemize}

Denote discrete time by $t=t_{0},\ldots,\horizon$, where the horizon 
$\horizon$ may be finite or infinite.
At time $t$, the \emph{state} of the mine is a \emph{profile} 
\begin{equation}
\state(t)=\big(\state_{\horizontal}(t)\big)_{\horizontal\in\setcolumns} 
\in \XX = \{1,\ldots,\DEPTH+1\}^\HORIZONTAL
\end{equation}
where $\state_{\horizontal}(t)\in\{1,\ldots,\DEPTH+1\}$ 
is the vertical position of the top block with 
horizontal position $\horizontal\in\setcolumns$.


An admissible profile is one that respects
local angular constraints at each point, due to physical requirements. 
A state $\state=(\state_{\horizontal})_{\horizontal\in\setcolumns}$ is said to be 
\emph{admissible} if the geotechnical slope constraints
are respected in the sense that
\begin{equation}
\norme{ \state_{\horizontal'}-\state_{\horizontal} }  \leq 1 \; , \quad 
\forall \horizontal'\in\neighbors(\horizontal) \; , \quad 
\horizontal\in\setcolumns \; ,
\label{eq:slope_constraints}
\end{equation}
where $\neighbors(\horizontal)$ is the set made of columns adjacent to column  $\horizontal$. 
Denote by $\sdo \subset \XX$, 
the set of admissible states satisfying the above slope
constraints~\eqref{eq:slope_constraints}. Notice that $\norme{
  \state_{\horizontal'}-\state_{\horizontal} }  \leq 1$ may be replaced
by $\norme{ \state_{\horizontal'}-\state_{\horizontal} }  \leq k$
according to slope constraints, or even by non-isotropic local slope
constraints. Implicitely, all cuboids have the same dimensions, but we
could deal with less regular situations. 


A decision is the selection of a column in 
$\CONTROL$,
the top block of which will be extracted.
A decision may also be the retirement option, that we
shall identify with an additional fictituous column denoted $\retirement$. 
Thus, a decision $\control$ is an element of the set 
\begin{equation}
\overline{\CONTROL} = \CONTROL \cup \{\retirement\} \; .
\end{equation}
The relation between columns sequencing and blocks scheduling is explicited in \S\ref{sec:columns_blocks} in the Appendix. 

At time $t$, if a column $\control(t) \in \{1,\ldots,\HORIZONTAL\} $ 
is chosen at the surface of the open-pit mine, 
the corresponding block is extracted and the profile 
$\state(t)=\big(\state_{\depth}(t)\big)_{\depth\in\setcolumns}$ becomes 
\begin{equation*}
\state_\depth(t+1)=
\left\{
\begin{array}{ll}
\state_\depth(t)+1 & \text{if} \depth=\control(t)\\
\state_\depth(t) & \text{else.}
\end{array}
\right.
\end{equation*}
In case of retirement option $\control(t)=\retirement$, 
then $\state(t+1)=\state(t)$ and the profile does not change.
In other words, the dynamics is given by 
$\state(t+1)=\DYNAMICS\big(\state(t),\control(t)\big)$ where 
\begin{equation}
\DYNAMICS_\depth(\state,\control) = \left\{ \begin{array}{lcl}
 \state_\depth +1 & \mtext{if} & \depth = 
\control \in\setcolumns \\
 \state_\depth & \mtext{if} & \depth \not = \control  \mtext{ or } 
\depth=\retirement\, .
\end{array} \right.
\label{eq:dynamics}
\end{equation}
Indeed, the top block of column $\depth$ is no longer at depth
$\state_\depth(t)$ but at $\state_\depth(t)+1$, while all other top blocks remain.
Of course, not all decisions $\control(t)=\depth$ are possible 
either because there are no blocks left in column 
$\depth$ ($\state_\depth=\DEPTH+1$) or because of slope constraints.

When in state $\state \in \sdo$, the decision $\control \in \CONTROL$ 
is \emph{admissible} if the future profile
$\DYNAMICS(\state,\control) \in \sdo$, namely if it satisfies the
geotechnical slope constraints.
This may easily be transformed into a condition $\control \in \dec(\state)$, 
where 
\begin{equation}
\dec(\state) \defegal 
\{ \control \in \CONTROL \mid \DYNAMICS(\state,\control) \in \sdo \} \, .
\label{eq:control_constraints}
\end{equation}

\subsection{Intertemporal profit maximization}

The open-pit mine optimal scheduling problem consists of finding a 
sequence of admissible blocks which maximizes an intertemporal discounted
extraction profit.  
It is assumed that the value of blocks differs in depth and column
because richness of the mine is not uniform among the zones as well as
costs of extraction. 
The profit model states that each block has an economic value 
$\rent(\depth,\surface) \in \RR$, supposed to be known (in the deterministic case).
By convention $\rent(\depth,\retirement)=0$ 
when the retirement option is selected.
Selecting a column $\control(t) \in \CONTROL$ at the surface of the 
open-pit mine, and extracting the corresponding block
at depth $\state_{\control(t)}(t)$ yields the
value $\rent\big(\state_{\control(t)}(t),\control(t)\big)$. 
When $\control(t)=\retirement$, there is no corresponding block and the following
notation $\state_{\control(t)}(t)=\state_{\retirement}(t)$ is meaningless, but this is without 
incidence since the value 
$\rent\big(\state_{\retirement}(t),\retirement\big)=0$.

With a discounting factor function
$\discount(t) $ (for instance, $\discount(t)=\discount^t$, or $\discount(t)=\discount^{y(t)}$ for a yearly discount, where 
$y(t)=\lfloor \frac{t}{\capacity} \rfloor$ is the ``year" of time $t$ 
and $\capacity$ is the number of blocks extracted per year), 
the value of a sequence (finite or infinite)
\begin{equation}
 \control(\cdot) \defegal \big( \control(t_{0}), \ldots, \control(\horizon) \big) 
\end{equation}
is given by the criterion
\begin{equation}
 \Value\big( \control(\cdot) \big) \defegal
\sum_{t=t_{0}}^{\horizon} \discount(t) 
\rent\big(\state_{\control(t)}(t),\control(t)\big) \; . 
\label{eq:criterion}
\end{equation}
Finding the value of the mine is solving the optimization problem 
\begin{equation}
\Value\opt \defegal 
\max \{ \sum_{t=t_{0}}^{\horizon} \discount(t) \rent\big(\state_{\control(t)}(t),\control(t)\big) \, , \quad 
\big( \control(\cdot), \state(\cdot) \big) \, , \, 
\control(t) \in \dec\big(\state(t)\big) \} \; ,
\label{eq:intertemporal_profit_maximization}
\end{equation}
where the maximum is over among all sequences \( \big( \control(\cdot), \state(\cdot) \big) \) which satisfy the slope constraints~\eqref{eq:control_constraints}. 
Any such sequence  \( \big( \control\opt(\cdot), \state\opt(\cdot) \big) \) 
such that 
\(  \Value\big( \control\opt(\cdot) \big) = \Value\opt \)
is an \emph{optimal scheduling sequence}. 


\subsection{Dynamic programming equation and the curse of dimensionality}

Theoretically, the open-pit mine optimal scheduling problem can be solved
by dynamic programming \citep*{Bellman:1957,Whittle:1982,Bertsekas:2000}.
It is well known that the dynamic programming approach suffers from 
the curse of dimensionality.
Indeed, to give a flavor of the numerical complexity of the problem,
the set $\sdo$ of acceptable states has a cardinal of order 
$2^{10} \times 3^4 = 82~944$ for a cubic $4 \times 4 \times 4$ mine, 
and of order $2^{16} \times 3^8 \times 4 \approx 1.72 \times 10^9$
for a cubic mine with 5 lateral blocks ($5 \times 5 \times 5$ cuboids).

Nevertheless, usual mines can reach more than $10^6$ blocks, and the dynamic programming approach will not be usable in practice, without further state reduction.

\section{Index strategies} 
\label{sec:indices}

The dynamic programming equation 
\( \VALUE(t,\state) = \max_{\control \in \dec(\state) } 
\Big(\discount(t) \rent(\control,\state_\control) +
\VALUE\big( t+1,\DYNAMICS(\state,\control) \big) \Big) \)
naturally leads to solutions as \emph{policies} or \emph{strategies},
where an optimal decision $\control$ at time $t$ depends no only on $t$,
but also on  the state $\state(t)$ \citep*{DeLara-Doyen:2008}.

In this section, we shall present a class of strategies called
\emph{index strategies}. Among them, the so-called \emph{Gittins index
strategy} plays a special role, in that it easily provides an upper
bound to the value of the mine. 

\subsection{Index based policy heuristics}

We introduce a technique to
obtain suboptimal results, based on so-called \emph{index strategies}.
The essence of this method is to model the problem by a set of
\emph{jobs}, each job being characterized by its state of progress,
and combined with an index, whose value will indicate the priority of
the job. At each time period, we choose the job of higher index to
work at, which has the effect of modifying its state of progress, and
update its index.

In the open-pit mine scheduling problem, the jobs in question will be
be the vertical columns located by their surface coordinates, and the
state of progress will be the depths of the columns as defined
previously. We define an index which, at each column, will map a value
generally linked with the worth of the blocks around and below the top
block of the column, and that includes or not the slopes constraints.

Various indices can be defined, each one giving a different strategy,
and therefore different results and running times.  Index
algorithms with slope admissibility constraints work as follows. For
each column $\horizontal$ in the block model, and for each local state
\( \state_{\horizontal} \) (attached to the column), a certain
$\INDEX_{\horizontal}(\state_{\horizontal}) \in \RR$ is calculated.
Then, for each column, we check whether or not its top block is
extractable (in terms of the slope constraints). Among the columns
whose top blocks are extractable, we pick the column with highest
index and remove its top block, recalculating the index for that
column. We iterate in this way until all blocks have been extracted,
therefore generating a sequence of blocks.

The index of a column can be any function of the block model. We
consider the following ones (see Figure~\ref{fig:indices} for a few
examples). They correspond to existing heuristics that we interpret in
terms of index. 

\begin{itemize}
  \item The greedy index $\INDEX^g$, that is, the one that uses as
  index the economic value of the top-most block in the column (that
  has not been extracted yet).

  \item The Gittins index $\INDEX^G$, that calculates the maximum
  discounted value of blocks in the column, relative to other columns.
  Block values are discounted block by block.

  \item The best-cone index $\INDEX^{C^*}$. This index is similar to
  the previous one, but calculates all values for the different cones
  truncated at different depths, selecting the one with highest
  value.

  \item Toposort $\INDEX^\tau$. This is the index attached to the
  algorithm proposed by \citep*{CEG+11}. To calculate this index, we
  first solve the linear relaxation of the problem and then set the
  following value for each block
  \[ T_i = T + 1 \ - \ \sum_{t=1}^T t \Delta \byvar{it} + (T+1)
  \left[1-\sum_{t=1}^T \byvar{it} \right].  \]
  Here, $\atvar{it}$ is the binary variable associated to the decision
  of extracting a block $i$ at time period $t$ and
  $\byvar{it}=\sum_{s\le t} \atvar{is}$ (see
  Appendix~\ref{sec:combinatorial} for a detailed formulation). 
  The index then corresponds to the value $T_i$ of the top-most block
  in the column (that has not been extracted yet).  
\end{itemize}

\begin{figure} \begin{center}
\begin{picture}(0,0)%
\includegraphics{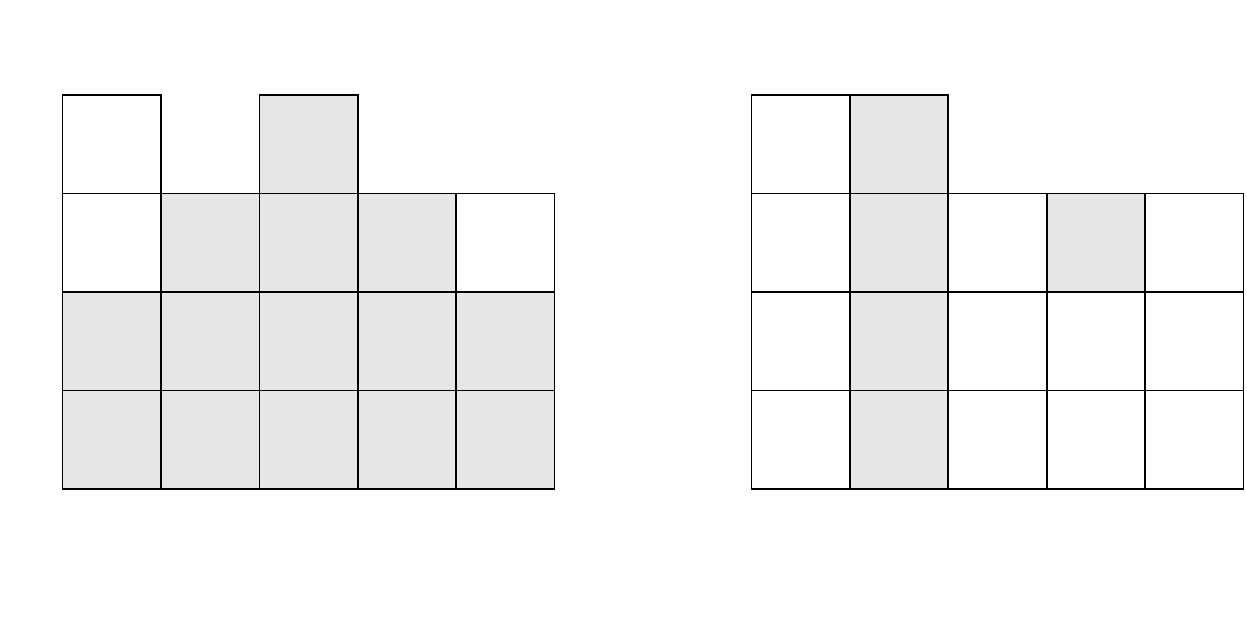}%
\end{picture}%
\setlength{\unitlength}{4144sp}%
\begingroup\makeatletter\ifx\SetFigFont\undefined%
\gdef\SetFigFont#1#2#3#4#5{%
  \reset@font\fontsize{#1}{#2pt}%
  \fontfamily{#3}\fontseries{#4}\fontshape{#5}%
  \selectfont}%
\fi\endgroup%
\begin{picture}(5697,2874)(-284,-1615)
\put(136,929){\makebox(0,0)[lb]{\smash{{\SetFigFont{12}{14.4}{\familydefault}{\mddefault}{\updefault}{\color[rgb]{0,0,0}$c_1$}%
}}}}
\put(1936,929){\makebox(0,0)[lb]{\smash{{\SetFigFont{12}{14.4}{\familydefault}{\mddefault}{\updefault}{\color[rgb]{0,0,0}$c_5$}%
}}}}
\put(586,929){\makebox(0,0)[lb]{\smash{{\SetFigFont{12}{14.4}{\familydefault}{\mddefault}{\updefault}{\color[rgb]{0,0,0}$c_2$}%
}}}}
\put(1036,929){\makebox(0,0)[lb]{\smash{{\SetFigFont{12}{14.4}{\familydefault}{\mddefault}{\updefault}{\color[rgb]{0,0,0}$c_3$}%
}}}}
\put(1486,929){\makebox(0,0)[lb]{\smash{{\SetFigFont{12}{14.4}{\familydefault}{\mddefault}{\updefault}{\color[rgb]{0,0,0}$c_4$}%
}}}}
\put(3286,929){\makebox(0,0)[lb]{\smash{{\SetFigFont{12}{14.4}{\familydefault}{\mddefault}{\updefault}{\color[rgb]{0,0,0}$c_1$}%
}}}}
\put(5086,929){\makebox(0,0)[lb]{\smash{{\SetFigFont{12}{14.4}{\familydefault}{\mddefault}{\updefault}{\color[rgb]{0,0,0}$c_5$}%
}}}}
\put(3736,929){\makebox(0,0)[lb]{\smash{{\SetFigFont{12}{14.4}{\familydefault}{\mddefault}{\updefault}{\color[rgb]{0,0,0}$c_2$}%
}}}}
\put(4186,929){\makebox(0,0)[lb]{\smash{{\SetFigFont{12}{14.4}{\familydefault}{\mddefault}{\updefault}{\color[rgb]{0,0,0}$c_3$}%
}}}}
\put(4636,929){\makebox(0,0)[lb]{\smash{{\SetFigFont{12}{14.4}{\familydefault}{\mddefault}{\updefault}{\color[rgb]{0,0,0}$c_4$}%
}}}}
\put(2881,524){\makebox(0,0)[lb]{\smash{{\SetFigFont{12}{14.4}{\familydefault}{\mddefault}{\updefault}{\color[rgb]{0,0,0}1}%
}}}}
\put(2881, 74){\makebox(0,0)[lb]{\smash{{\SetFigFont{12}{14.4}{\familydefault}{\mddefault}{\updefault}{\color[rgb]{0,0,0}2}%
}}}}
\put(2881,-376){\makebox(0,0)[lb]{\smash{{\SetFigFont{12}{14.4}{\familydefault}{\mddefault}{\updefault}{\color[rgb]{0,0,0}3}%
}}}}
\put(2881,-826){\makebox(0,0)[lb]{\smash{{\SetFigFont{12}{14.4}{\familydefault}{\mddefault}{\updefault}{\color[rgb]{0,0,0}4}%
}}}}
\put(-269,524){\makebox(0,0)[lb]{\smash{{\SetFigFont{12}{14.4}{\familydefault}{\mddefault}{\updefault}{\color[rgb]{0,0,0}1}%
}}}}
\put(-269, 74){\makebox(0,0)[lb]{\smash{{\SetFigFont{12}{14.4}{\familydefault}{\mddefault}{\updefault}{\color[rgb]{0,0,0}2}%
}}}}
\put(-269,-376){\makebox(0,0)[lb]{\smash{{\SetFigFont{12}{14.4}{\familydefault}{\mddefault}{\updefault}{\color[rgb]{0,0,0}3}%
}}}}
\put(-269,-826){\makebox(0,0)[lb]{\smash{{\SetFigFont{12}{14.4}{\familydefault}{\mddefault}{\updefault}{\color[rgb]{0,0,0}4}%
}}}}
\put(991,1109){\makebox(0,0)[lb]{\smash{{\SetFigFont{12}{14.4}{\familydefault}{\mddefault}{\updefault}{\color[rgb]{0,0,0}Mine 1}%
}}}}
\put(4141,1109){\makebox(0,0)[lb]{\smash{{\SetFigFont{12}{14.4}{\familydefault}{\mddefault}{\updefault}{\color[rgb]{0,0,0}Mine 2}%
}}}}
\put( 46,-1231){\makebox(0,0)[lb]{\smash{{\SetFigFont{12}{14.4}{\familydefault}{\mddefault}{\updefault}{\color[rgb]{0,0,0}(a) $\INDEX^{C}{c_3}(1)$}%
}}}}
\put(3241,-1231){\makebox(0,0)[lb]{\smash{{\SetFigFont{12}{14.4}{\familydefault}{\mddefault}{\updefault}{\color[rgb]{0,0,0}(b) $\INDEX^G_{c_2}(1)$}%
}}}}
\put(3241,-1546){\makebox(0,0)[lb]{\smash{{\SetFigFont{12}{14.4}{\familydefault}{\mddefault}{\updefault}{\color[rgb]{0,0,0}(c) $\INDEX^g_{c_4}(2)$}%
}}}}
\end{picture}%
  \caption{Example of index strategies in two small 2-D mines: (a)
  Cone, (b) Gittins and (c) Greedy.}
  \label{fig:indices}
\end{center} \end{figure}

Famous techniques in mining can be interpreted as index strategies.
For example, the Greedy index corresponds to a greedy strategy of
always picking for extraction the block in the surface that: (a) is
extractable (in terms of slope constraints) and (b) has the highest
economic value. Furthermore, the Cone index described before is
close to the {\em Gershon Algorithm} \citep*{Ger1987} which 
also considers the successors' cone, but intersected with the ultimate
pit.

\subsection{An upper bound given by the  Gittins index strategy}

We shall now provide upper and lower bounds to the value~\eqref{eq:intertemporal_profit_maximization}
of the mine by means of index strategies.

To each {profile}
$\state=\big(\state_{\horizontal}\big)_{\horizontal\in\setcolumns} \in
\XX$ and column $\horizontal \in \overline{\setcolumns}$, associate the 
local state \( \state_{\horizontal} \in \{1,\ldots,\DEPTH+1\} \), which 
is the vertical position of the top block with 
horizontal position $\horizontal$.
For \( \discount\upper \in ]0,1[ \), 
define the Gittins index by
\begin{equation}
 \INDEX^G_{\horizontal}(\state_{\horizontal}) \defegal 
\displaystyle \sup_{\tau =t_{0}, \ldots, +\infty}
\frac{ \displaystyle \sum_{s=t_{0}}^{\tau} \discount\upper^{s} 
\rent(\horizontal,\state_{\horizontal} + s ) }%
{\displaystyle \sum_{s=t_{0}}^{\tau} \discount\upper^{s} } \; ,
\end{equation}
where \( \rent(\horizontal, \depth) \defegal 0 \) when 
\( \depth > \DEPTH \) (this corresponds to fictituous blocks with zero values below the mine).
With the notations of \S\ref{ss:state}, 
the Gittins index strategy is defined by
\begin{subequations}
\begin{equation}
\control\Gittins(t) \in \arg \max \{ \INDEX\Gittins_{\horizontal}\big(\state\Gittins_{\horizontal}(t)\big) \, , \quad
\horizontal \in \overline{\CONTROL} \}  \; ,
\end{equation}
\begin{equation}
\state\Gittins(t+1)=\DYNAMICS\big(\state\Gittins(t),\control\Gittins(t)\big) \; .
\end{equation}
\label{eq:Gittins_index_strategy}
\end{subequations}

\begin{proposition}
Suppose that \( \horizon=+\infty \), and 
that the discounting factor function $\discount(t) $ in~\eqref{eq:criterion} satisfies 
\begin{equation}
0 \leq \discount(t) \leq \discount\upper^t < 1 \; . 
\label{eq:discount_inequality}
\end{equation}
The value~\eqref{eq:intertemporal_profit_maximization}
of the mine is bounded above as follows
\begin{equation}
 \Value\opt \leq 
\sum_{t=t_{0}}^{+\infty} \discount\upper^{t} 
\rent\big(\state_{\control\Gittins(t)}(t),\control\Gittins(t)\big) \; ,
\label{eq:upper_bound}
\end{equation}
where the sequence \( \control\Gittins(\cdot) \) is given by the 
Gittins index strategy~\eqref{eq:Gittins_index_strategy} above. 
A lower bound is given by
\begin{equation}
 \Value\big( \control^i(\cdot) \big) \leq  \Value\opt 
\label{eq:lower_bound}
\end{equation}
where the sequence \( \control^i(\cdot) \) is given by  any index strategy respecting slope admissibility constraints 
\begin{subequations}
\begin{equation}
\control^i(t) \in \arg \max \{ \INDEX^i_{\horizontal}\big(\state^i_{\horizontal}(t)\big) \, , \quad
\horizontal \in \dec(\state^i) \}  \; ,
\end{equation}
\begin{equation}
\state^i(t+1)=\DYNAMICS\big(\state^i(t),\control^i(t)\big) \; .
\end{equation}
\end{subequations}
\end{proposition}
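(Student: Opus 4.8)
For the lower bound~\eqref{eq:lower_bound} there is essentially nothing to prove beyond admissibility. Any index strategy that restricts its choice to slope-admissible columns, namely $\control^i(t)\in\dec(\state^i(t))$, produces a pair $\big(\control^i(\cdot),\state^i(\cdot)\big)$ that is feasible for the maximization~\eqref{eq:intertemporal_profit_maximization}. Since $\Value\opt$ is by definition the maximum of the criterion over all such feasible pairs, we obtain $\Value(\control^i(\cdot))\le\Value\opt$ at once. No property of the index enters here other than feasibility.

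For the upper bound~\eqref{eq:upper_bound} the plan is to relax, then recognise a bandit, then compare discounts. First I would drop the slope constraints~\eqref{eq:control_constraints}: this enlarges the feasible set, so $\Value\opt$ is bounded above by the value of the relaxed problem, whose only remaining coupling is that a single block is extracted per period. By the dynamics~\eqref{eq:dynamics}, each column $\horizontal$ then evolves independently, delivering the rewards $\rent(\horizontal,\state_\horizontal+s)$ for $s=0,1,\dots$ as it is successively selected, the retirement column $\retirement$ being the one with identically zero reward. The relaxed problem is thus a deterministic multi-armed bandit with the geometric discount $\discount\upper$, the infinite-horizon sums converging because $\discount\upper<1$ and the block values are bounded.

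Second, I would invoke the optimality of the Gittins index rule for this bandit: for geometric discounting it is optimal to select at each date the arm of largest current index $\INDEX\Gittins_\horizontal$, and the optimal relaxed value equals the value collected along the index trajectory $\control\Gittins(\cdot)$ of~\eqref{eq:Gittins_index_strategy}, that is, the right-hand side of~\eqref{eq:upper_bound}. In this deterministic setting the statement follows from a standard interchange argument, whose engine is the defining inequality of the index, namely $\sum_{s=t_{0}}^{\tau}\discount\upper^{s}\rent(\horizontal,\state_\horizontal+s)\le\INDEX\Gittins_\horizontal(\state_\horizontal)\sum_{s=t_{0}}^{\tau}\discount\upper^{s}$ for every horizon $\tau$.

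Third, I would relate the original criterion, discounted by $\discount(t)$, to the geometric one collected by the index rule, using~\eqref{eq:discount_inequality}, $\discount(t)\le\discount\upper^{t}$, together with the observation that in the relaxed problem the $s$-th block removed from a column is extracted at a global date $\theta\ge s$, so that $\discount(\theta)\le\discount\upper^{\theta}\le\discount\upper^{s}$. I expect this last step to be the main obstacle. The passage from the $\discount$-criterion to the $\discount\upper$-criterion is termwise favourable only for non-negative block values, since $\discount(t)\rent\le\discount\upper^{t}\rent$ requires $\rent\ge0$; for the waste blocks of negative value that open-pit models necessarily contain, and that must be removed to reach the ore below, the inequality runs the wrong way. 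Closing the bound therefore requires controlling the aggregate contribution of these negative terms, and this is precisely where the zero-valued retirement option --- equivalently, the non-negativity of the indices actually realised along the extraction path --- has to be brought in.
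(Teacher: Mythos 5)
Your lower-bound argument coincides with the paper's: a slope-admissible index strategy produces a pair \(\big(\control^i(\cdot),\state^i(\cdot)\big)\) that is feasible for~\eqref{eq:intertemporal_profit_maximization}, and \eqref{eq:lower_bound} follows from the definition of \(\Value\opt\) as a maximum. Your upper-bound plan is also, in substance, the paper's proof, with the same three moves in a different order: the paper first replaces \(\discount(t)\) by \(\discount\upper^t\) inside the constrained maximization, then relaxes the slope constraints \(\control(t)\in\dec\big(\state(t)\big)\), and finally invokes Gittins' theorem for the resulting deterministic multi-armed bandit with independent arms (the columns, plus the zero-reward retirement arm) and geometric discount \(\discount\upper\). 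The one genuine difference is that the paper performs the discount switch silently, whereas you flag it as valid termwise only when \(\rent\geq 0\).

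That flag is not a pedantic worry: the step is irreparable in general, and the repair you suggest (the retirement option, i.e.\ non-negativity of the indices realised along the Gittins path) cannot close it. Consider a mine with a single column (\(\HORIZONTAL=1\), so slope constraints are vacuous and the relaxation is immaterial), block values \(1,\,-1,\,100,\,0,\,0,\dots\) from the top down, \(\discount\upper=0.9\), \(t_0=0\), and the non-increasing discount function \(\discount(\cdot)=(1,\,0.81,\,0.81,\,0,\,0,\dots)\), which satisfies \(0\leq\discount(t)\leq\discount\upper^t\) (reading the last inequality in~\eqref{eq:discount_inequality} as \(\discount\upper<1\), as one must even for the statement to be consistent at \(t=t_0\)). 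The Gittins strategy~\eqref{eq:Gittins_index_strategy} extracts the three blocks consecutively --- the realised indices are approximately \(29.9\), \(46.8\) and \(100\), all positive, so retirement is never used --- and the right-hand side of~\eqref{eq:upper_bound} equals \(1-0.9+81=81.1\). But the slope-feasible schedule extracting the same three blocks at times \(0,1,2\) earns \(1-0.81+81=81.19\) under \(\discount(\cdot)\), whence \(\Value\opt\geq 81.19>81.1\): the negative block is charged less by \(\discount\) than by \(\discount\upper\), which is exactly the reversal you identified, and \eqref{eq:upper_bound} fails. So your proof cannot be completed as written --- but neither can the paper's: its first displayed inequality makes the same tacit termwise comparison and is false on this example. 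The proposition becomes correct (by your argument and the paper's alike) under an additional hypothesis, e.g.\ \(\rent\geq 0\) for all blocks, or \(\discount(t)=\discount\upper^t\) exactly, in which case the discount switch is an identity and only the relaxation and Gittins steps remain; some such hypothesis should be added to the statement.
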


\begin{proof}
Recall that \( \Value\opt \) is the maximal value of~\eqref{eq:criterion} among all sequences \( \big( \control(\cdot), \state(\cdot) \big) \) which satisfy the slope constraints~\eqref{eq:control_constraints}. Therefore, \( \Value\opt \) is larger than any  \( \Value\big( \control(\cdot) \big) \), in particular for a sequence \( \control^i(\cdot) \) given by  an index strategy respecting slope admissibility constraints. This is why \eqref{eq:lower_bound} holds true.

On the other hand, by~\eqref{eq:discount_inequality}, we have that 
\[
 \Value\opt \leq 
\max \{ \sum_{t=t_{0}}^{\horizon} \discount\upper^{t}  \rent\big(\state_{\control(t)}(t),\control(t)\big) \, , \quad 
\big( \control(\cdot), \state(\cdot) \big) \, , \, 
\control(t) \in \dec\big(\state(t)\big) \} \; .
\]
Now, if we relax the slope admissibility constraints \( \control(t) \in \dec\big(\state(t)\big) \), we deduce that 
\[
 \Value\opt \leq 
\max \{ \sum_{t=t_{0}}^{\horizon} \discount\upper^{t}  \rent\big(\state_{\control(t)}(t),\control(t)\big) \, , \quad 
\big( \control(\cdot), \state(\cdot) \big)  \} \; .
\]
Gittins theorem \citep*{Gittins:1979} asserts that the optimum for the right hand side is achieved for the 
Gittins index strategy~\eqref{eq:Gittins_index_strategy}.
Indeed, the problem is a deterministic multi-armed bandit, with independent arms since the slope admissibility constraints are relaxed, enabling thus to select any column. This is why \eqref{eq:upper_bound} holds true.
\end{proof}

Let $\NPV_{opt}$ be respectively the optimal $\NPV$,
$\NPV_{ind}$ the $\NPV$ given by any index
strategy respecting the slopes constraints, and $\NPV_{ub}$ the $\NPV$ given by
the Gittins index without slopes constraints, but with a
discounting factor function $\discount(t) $ which
satisfies~\eqref{eq:discount_inequality}. Then we have the following inequality:
\begin{equation}
\NPV_{ind}\le \NPV_{opt}\le \NPV_{ub} \; .
\end{equation}

\section{Numerical examples} 
\label{sec:results}

In this section, we present and discuss numerical results obtained using 
index heuristics over a set of synthetic data and the Marvin block model.

\subsection{The Marvin dataset}

The mine considered for this study is a well known mine named {\em
Marvin}, which is available for use within the mine planning
optimization Whittle from GenCom software.  
The overall number of blocks in Marvin is about 53,000. 
The block model contains the following data: block coordinates
($x,y$ and $z$), copper and gold grades ($\textrm{cooper}_i$ and
$\textrm{gold}_i$ respectively) and density. From these attributes we
calculate: a block tonnage $w_i$ (the product of the density by the
volume of the block) and the copper content (the tonnage of the block
by its copper grade). We aim to maximize overall copper production
under a transportation capacity of 30,000 tons per day.  Finally, we
consider annual time-periods with a yearly discount rate equivalent to
a 10\% opportunity cost, hence a yearly discount factor $\DISCOUNTRATE  = \frac{1}{1 + 0.1}$. 


\subsection{Using block sequences to obtain blocks schedulings}
\label{sec:columns_blocks}

First, we present how to transform the output of an indexing
strategy into a block scheduling and, therefore, a solution of \OPBSP.
We regard the output of an indexing algorithm as a {\em sequence} of
blocks: a block sequence is a tuple of blocks $S=(i_1,
i_2, \ldots, i_N)$ that is compatible with the precedence
constraints. 

A sequence $S$ can be converted into a solution of the open-pit
block sequencing problem with capacity constraints, by creating nested
pits that extract the blocks in the order given by the sequence. More
precisely, let us say that $P \subset \BLOCKSET$ is capacity-feasible
at time period $t$ if for each resource $r$, we have that 
\(
    \sum_{i \in P} a(i,r) \le C^+_{r,t}
\). 
We can then follow the next procedure to construct a block scheduling:

\begin{enumerate}
  \item Set $k=1, t=1, P_0 = P_1 = \emptyset$. 

  \item While $t \le T$:
  \begin{enumerate}
    \item While $k<N$ and $(P_t \cup \{i_k\}) \setminus P_{t-1} $ is
    capacity-feasible at time period $t$: $P_t \leftarrow P_t \cup
    \{i_k\}$, $k \leftarrow k+1$.

    \item $t \leftarrow t+1$.
  \end{enumerate}
\end{enumerate}
Notice, however, that there may exist some room for improvement on the
obtained block scheduling, as it could happen that the blocks assigned
to the very last time-period have a negative overall value. If this is
the case, we reset these blocks as unextracted.

An alternative way to convert a block sequence into a block scheduling
is the following. Given the sequence $S=(i_1, i_2, \ldots, i_K)$, we
set $\BLOCKSET = \{i_k : k=1, 2, \ldots, K\}$ and $\ARCSET = \{ (i_k,
i_{k+1}) : k = 1, 2, \ldots, K-1\}$ and then directly solve the
instance $\OPBSP(\BLOCKSET,\ARCSET,V,A,T,\DISCOUNTRATE,C^+,C^-)$. This is
equivalent to the procedure described above with the last ``cleaning''
phase.

\subsection{Results and discussion}

We now present the different results obtained for the
heuristics and data sets, and we comment the findings of the numerical
experiences.

\begin{table}[h] \begin{center}
  \begin{tabular}{|l|c|c|c|c|c|}
  \hline
  \hline
  Mine & - & Best Index & TopoSort & LP & Index UB \\
  \hline
  \multirow{2}{*}{{\tt PM1}}
  & Time & 0.43s & 307s & 307s & 0.08s \\
  & Value & 358.42 & 432.14 & 439.30 & 521.53 \\
  \hline
  \multirow{2}{*}{{\tt PM2}}
  & Time & 0.27s & 375s & 375s & 0.07s \\
  & Value & 319.74 & 438.60 & 439.84 & 674.24 \\
  \hline
  \multirow{2}{*}{{\tt PM3}}
  & Time & 0.27s & 362s & 362s & 0.07s \\
  & Value & 139.50 & 149.06 & 198.84 & 318.72 \\
  \hline
  \multirow{2}{*}{{\tt Marvin}}
  & Time & 1,036.00s & $\infty$ & $\infty$ & 15.1s \\
  & Value & 392.9 & - & - & 488.5 \\
  \hline
  \hline
  \end{tabular}
  \caption{Numerical results by heuristic and instance. Values in
  million of copper tons. Running time in seconds. LP is linear
  programming. Index UB is index upper bound.}
  \label{tbl:results}
\end{center} \end{table}

Numerical experiences were run with an Intel Pentium Dual Core, 2.8 Ghz
processor running Linux 2.6.30-1. LP's were solved using the GNU
Linear Programming Toolkit ({\tt GLPK}) using the primal simplex
method.

Results in running time and economic value ($\NPV$) are presented in
Table~\ref{tbl:results}.
We observe that, while TopoSort obtains better results (closer to
the LP upper bounds), this approach does not ``scale'' well, as it does not
produce feasible solutions for the Marvin instances. Indeed, the main
difficulty in this case is to solve the Linear Relaxation (LP), which
did not end within reasonable time (12 hours). 
Conversely, the index strategies provide mixed results for the bounds, but the
execution time is quite small, making them good candidate for fast
schedulers and therefore useable with uncertainty scenarios, for
example, on the grades.

We observe that there is a lot of room to improve the speed of the
heuristics by optimizing the code or, for example, parallelizing some
of the computations.

\section{A mathematical framework for mine scheduling under uncertainty}
\label{sec:uncertainty-framework}

We present here a general probabilistic framework for the \OPBSP,
that allows a dynamical use of information (learning), permitting to develop
adaptive strategies, and which includes the planning solutions as a
particular case.  The approach is mostly mathematical
and formal. However, in the last part, we suggest possible heuristics
for future research.

\subsection{Block attributes}

Denote discrete time by $t=t_{0},\ldots,\horizon$, where the horizon 
$\horizon$ is supposed to be finite for simplicity.
Denote by $\setblocks$ the set of all blocks. 
Each block $\block\in\setblocks$, when extracted in period $t$, is
characterized by a $\nba$-vector of \emph{attributes}
$\uncertainty_{\block}(t)\in \Uncertain=\RR^\nba$. These attributes
can for instance be the rock and ore volumes, price, cost, etc. In the
deterministic model, these values will be simple real numbers
perfectly known, but in our case it will be an uncertain vector. 

This uncertain vector $\uncertainty_{\block}(t)$ will summarize
various sources of uncertainty, and will be the basis of the
construction of the worth $\worth_{\block}(t)$ of block $\block$ at time
$t$.
It can
for instance be of the following form, if the mine contains $d$
different ores, \begin{equation}
\uncertainty_b(t)=(\price(t),\amount(\block),\cost(\block,t),\ldots)
\label{eq:modelattributes} \end{equation} \begin{equation}
\worth_b(t)=\price(t)\cdot\amount(\block)-\cost(\block,t) \; , 
\label{eq:modelvalue} \end{equation}
where $\price(t)\in\RR^d$ is an uncertain vector representing the
selling prices per unit of the $d$ different ores at time $t$,
$\amount(\block)\in\RR^d$ is an uncertain vector representing the
amount of each ore in the block $\block$, and $\cost(\block,t)$ is a
uncertain variable representing the extraction cost of the block
$\block$ at time $t$, each of them being coordinates of the attributes
vector $\uncertainty_{\block}(t)$. This formulation presents the
advantage to split the price ditribution modeling and the distribution
of the different ores in the mine; it is of course a simple instance
that can be replaced by more sophisticated models including processing
costs or other geotechnical data.

\subsection{Scenarios}

In the sequel, we will use the following notations
\begin{equation*}
\uncertainty(t):=(\uncertainty_{\block}(t))_{\block\in\setblocks}
\end{equation*}
for the collection of the attributes of the mine blocks at a time period $t$.
while
A sequence
\begin{equation*}
\uncertainty(\cdot):=(\uncertainty(t_0),\hdots,\uncertainty(\horizon))
\end{equation*}
is called a \emph{scenario} and belongs to the product set
\begin{equation*}
\Omega:=\prod_{t=t_0}^{\horizon}\prod_{\block\in\setblocks}
\Uncertain = \RR^{\numb\cdot(\horizon-t_0+1)\cdot\nba} \; , 
\end{equation*}
which is the set of all possible scenarios. The situation where $\Omega$ is a singleton (a unique scenario) corresponds to the deterministic case.

\subsection{A priori information data on the scenarios}

Additional a priori information on the scenarios is generally given 
either by probabilistic or by set membership settings.

\subsubsection*{Stochastic assumptions}

Notice that the vectors $\uncertainty_{\block}(t)$ are {a priori} not
independent, neither with respect to $\block$ (spatially), nor with
respect to $t$ (temporally). Indeed, the price of raw materials is
highly correlated in time, and a strong spatial correlation exists in
the repartition of the ore. Many models of the orebody are based on the
notion of variogram, which is a geostatistical tool giving an index of
the spatial correlation of a certain type of ore. It gives a
represention of the typology of the ore in a site, some metals as gold
tending to aggregate into nuggets (with a strong short-distance
correlation but a lower long-distance one), whereas other like copper
will have a more long-distance dependence. It opens the way to orebody
modeling such as kriging, a widespread interpolation method in
geostatistics.  

In the probabilistic formalism, the set $\Omega$ of all scenarios is equipped with the Borel $\sigma$-field \( \tribu{F}=\borel{\RR^{\numb\cdot(\horizon-t_0+1)\cdot\nba}}\).
The $\uncertainty_\block(t)$ becomes random vectors, and the orebody is represented by a joint distribution law 
\begin{equation}
\mathcal{L}(\uncertainty_\block(t),\block\in\setblocks,t\in[t_0,...,\horizon])
\; ,
\label{eq:loijointe}
\end{equation}
which is a probability on \( ( \Omega, \tribu{F} ) \).
For instance, in the case of a unique type of ore, we can model $(\amount(\block))_{\block\in\setblocks}$ by a Gaussian vector of size $\numb$, characterized by its mean vector $\mu=(\EE[\amount(\block)])_{\block\in\setblocks}$ and its covariance matrix $\Sigma=(Cov(\amount(\block),\amount(\block')))_{\block,\block'\in\setblocks}$, with constant price $\price(t)=\price$ and cost $\cost(\block,t)=\cost$. The set of the worths $\worth_{\block}(t),\block\in\setblocks, t\in[t_0,...,\horizon]$, is then a Gaussian vector of size $\numb\cdot(\horizon-t_0+1)$ whose mean vector and covariance matrix can be calculated by means of $\mu$ and $\Sigma$.

\subsubsection*{Set membership}

 For a given block $\block$ and a given time period $t$, $\uncertainty_{\block}(t)$ can take its value in a certain set $\mathbb{S}(\block,t)\subset\RR^\nba$, which depends on the model. In the most general case, if we know nothing about the mine, $\mathbb{S}(\block,t)$ will be $\RR^\nba$, but it can for instance be reduced to intervals or even to a finite number of values, or to a singleton in a deterministic model.

\subsection{Decisions and constraints}

Each period of time (year, for instance), we can extract a certain number of blocks, and therefore we model our decision by a variable $\deci(t)\in\setdecis = 2^\setblocks$, corresponding to the blocks removed at time $t\in[t_0,...,\horizon]$, which form a subset of $\setblocks$. 
Here, $2^\setblocks$ denotes the set of subsets of $\setblocks$ (the power set  of $\setblocks$).
Since $\setdecis$ is a finite set, we equip it with the complete 
$\sigma$-field \( \tribu{U}=2^\setdecis \). 
We introduce the notations:
\begin{equation*}
\deci^t \defegal \big( \deci(t_0),\hdots,\deci(t) \big)
\text{and}
\deci(\cdot) \defegal \big(\deci(t_0),\hdots,\deci(\horizon)\big) \; .
\end{equation*}
The set $\HISTORY \defegal \Omega\times\setdecis^{\horizon-t_0+1}$ is called the \emph{history space}. 
Elements of the set $\HISTORY_t \defegal \Omega\times\setdecis^{t-t_0+1}$ represent history up to time $t$.

To capture {slope and uncertain capacity constraints}, we can restrict decisions as belonging to a subset 
\( \setdecis\big(t,\uncertainty(\cdot),\deci^{t-1}\big) \) of \( \setdecis \)
as follows:
\begin{equation}
\deci(t) \in \setdecis\big(t,\uncertainty(\cdot),\deci^{t-1} \big) \, .
\label{eq:constraints}
\end{equation}

\subsection{On-line information}

After having seen a priori information data on the scenarios, we now turn to 
on-line information available for the planner at time $t$. In essence, it is built upon the attributes $(\uncertainty_b(t))_{\block,t}$ we have discovered, and thus it {a priori} also depends on the past extractions $\deci^{t-1}$ (i.e. the choices done on $[t_0,...,t-1]$). Mathematically, we shall represent information at time $t$ as a $\sigma$-algebra $\info_t$ on the {history space} $\Omega\times\setdecis^{\horizon-t_0+1}$.

\begin{itemize}
\item The {blind} information pattern is 
\begin{equation*}
\info_t=\{\UNCERTAIN,\varnothing\}\otimes
\{\setdecis^{\horizon-t_0 +1},\varnothing\} \; ,
\end{equation*}
where the decision-maker cannot distinguish elements in the history space (he cannot even recall his past decisions).
\item 
The anticipative point of view corresponds to a stationary and constant
\begin{equation}
\info_t = \tribu{F} 
\otimes \{\setdecis,\varnothing\} \; .
\end{equation}
The decision-maker knows the attributes of each block at each time, and knows them in advance: he is a visionary decision-maker. 
A visionary decision-maker having recall of his past decisions would be modeled as \( \info_t = \tribu{F}
 \otimes \bigotimes_{s=t_0}^{t-1} \tribu{U} \).
\item 
A \emph{causal} information pattern is one in which the decision-maker
cannot base his decision at time $t$ upon his future decisions, and it is represented by the condition
\begin{equation}
\info_t \subset \tribu{F}
 \otimes \bigotimes_{s=t_0}^{t-1} \tribu{U}  \; .
\label{eq:causal}
\end{equation}
\item In the \emph{cumulative information pattern}, 
let us denote by
\begin{equation*}
\State(t,u^{t-1}) \defegal \cup_{s=t_0}^t\deci(t)\subset\setblocks
\end{equation*}
the set of the blocks which have been removed at time $t$ following the
sequence $u^{t-1}$ of decisions in the periods $[t_0,...,t-1]$.
If we assume that, each time we extract a block $\block$ at period $t$,
we learn the exact value of the uncertainty $\uncertainty_b(t)$,
we define the information as
\begin{equation}
\info_t =\sigma \{ (\uncertainty_{\block}(s),\deci^{s-1}) \, , \quad \block\in \State(\deci^{s-1},s), s\in[t_0,...,t-1] \} \; ,
\label{eq:infoexemple}
\end{equation}
where we have abusively identified \( (\uncertainty_{\block}(s),\deci^{s-1}) \) 
with the coordinate random variable on the {history space} $\Omega\times\setdecis^{\horizon-t_0+1}$.

This formulation is adapted to a dynamical strategy, in which we learn step-by-step the information depending on our past choices.
\end{itemize}

\subsection{Adaptive strategies}

We now have the tools to define strategies adapted to on-line information.
We assume that the information pattern is causal, that is, satisfies~\eqref{eq:causal}.
A (causal) \emph{strategy} is a sequence \( \policy = \big( \policy_t \big)_{t = t_0, \ldots, \horizon} \) of \emph{policies}
\begin{equation*}
\policy_t : \Omega\times\setdecis^{t-t_0}  \rightarrow\setdecis
\end{equation*}
such that, for all $t = t_0, \ldots, \horizon$, $\policy_t $ is measurable with respect to $\info_t$.

Once a strategy \( \policy \) and a scenario \( \uncertainty(\cdot) \) are given, decisions are inductively deduced by
\begin{equation}
 \deci(t)=\policy_t(\uncertainty(\cdot), \deci^{t-1}) \; .
\end{equation}
Now, strategies will be our optimization variables.

If the family of sets \( \setdecis\big(t,\uncertainty(\cdot),\deci^{t-1} \big) \) in~\eqref{eq:constraints} is 
 measurable with respect to $\info_t$, we may restrict ourselves to 
strategies in the admissible set $\setpolicies$ of the policies compatible with the constraints (capacity constraints, slopes constraints, etc.).
For instance, a capacity constraints of $k$ blocks per time unit will
imply that, for $\policy\in\setpolicies$, the $\deci(t)$ generated by
$\policy$ will not be more than $k$, or for a certain type of slopes constraints and precedence extraction relations, that the decisions $\deci(t)$ generated by $\policy$ will be compatible with the constraints induced by the blocks $\State(t,\deci^{t-1})$ already removed.

A strategy $\policy\in\setpolicies$ is said to be an \emph{open-loop} strategy if $\policy_t$ is a constant mapping for all $t$. In other words, an open-loop strategy plans the entire extraction sequence before starting it, and does not modify the sequence even if one gets information over time. In the more general case in which $\policy$ depends on the information, the strategy is said to be a \emph{closed-loop} strategy. It corresponds to the adaptive case.



\subsection{Decision criteria under uncertainty}

For a given scenario \( \uncertainty(\cdot) \) and 
a given control sequence \( \deci(\cdot) \), 
the sum of discounted profits (NPV) 
is given by 
\begin{equation}
\gain\big(\uncertainty(\cdot),\deci(\cdot)\big) =
\sum_{t=t_0}^{\horizon}\discount(t)\sum_{\block\in\deci(t)}\worth_{\block}(t)\; .
\label{eq:uncertain_criterion}
\end{equation}
For a given scenario \( \uncertainty(\cdot) \) and 
a given strategy \( \policy \) (adapted to the information pattern $\info_t$, $t=t_0, \ldots, \horizon$), let us put
\begin{equation}
 \gain^\policy\big( \uncertainty(\cdot) \big) \defegal 
\gain\big(\uncertainty(\cdot),\deci(\cdot)\big) \mtext{ where }
 \deci(t)=\policy_t(\uncertainty(\cdot), \deci^{t-1}) \; .
\label{eq:policy_uncertain_criterion}
\end{equation}
Now, contrarily to deterministic optimization, we do not know in advance the 
scenario \( \uncertainty(\cdot) \). How the decision-maker aggregates~\eqref{eq:policy_uncertain_criterion} with respect to the uncertainties, before optimizing, 
reflects his sensitivity to risk. The most common aggregates are the \emph{robust} (or worst-case) and the \emph{expected} criteria, but we also present other examples.

\begin{itemize}
\item
\subsubsection*{The expected criterion}

The expected optimization problem is
\begin{equation}
\max_{\policy\in\setpolicies}\EE^{\PP}[\gain^\policy\big( \uncertainty(\cdot) \big) ]
\; ,
\label{eq:expcNPV}
\end{equation}
where $\EE^{\PP}$ denotes the mathematical expectation with respect to a probability $\PP=\mathcal{L}(\uncertainty_\block(t),\block\in\setblocks,t\in[t_0,...,\horizon])$ on the space $\Omega$ of scenarios.
This formulation aims to maximize the mean NPV, that is the average value of all possibilities, weighted by their probability to happen. It is the best formulation you can choose in terms of average gain, but it does not penalize the possible realizations of the worst cases.

\item
\subsubsection*{The robust criterion}

The robust optimization problem is
\begin{equation}
\max_{\policy\in\setpolicies} \min_{\uncertainty(\cdot)\in\Omega} \gain^\policy\big( \uncertainty(\cdot) \big) \ .
\label{eq:worst}
\end{equation}
The strategy given by this formulation ensures to maximize the NPV if the worst case happens.

\item
\subsubsection*{The multi-prior approach}

Suppose that the space $\Omega$ of scenarios is equipped with different probabilities $\PP$ in a set $\mathfrak{P}$, reflecting \emph{ambiguity} with respect to the stochastic model.  The multiprior approach is a combination of the robust and the expected criteria by taking the worst belief in term of expected NPV:
\begin{equation}
\max_{\policy\in\setpolicies}\min_{\PP\in\mathfrak{P}}\EE^{\PP}[\gain^\policy\big( \uncertainty(\cdot) \big) ]\ .
\label{eq:multiprior}
\end{equation}

\item
\subsubsection*{An expected criteria under probability constraint}

This last formulation is similar to the maximization of the expected
NPV, but with an additional constraint to handle the risk.
Given two parameters $\alpha\in \RR$ and $p\in [0,1]$, the expected optimization problem under probability constraint is
\begin{equation}
\max_{\policy\in\setpolicies}\EE^{\PP}[\gain^\policy\big( \uncertainty(\cdot) \big) ]
\label{eq:additional}
\end{equation}
under the restriction that 
\begin{equation}
\PP[\gain^\policy\big( \uncertainty(\cdot) \big) \le \alpha]\le p \; .
\end{equation}
The meaning of this formulation is to maximize the expected profit, with
the condition that the chosen strategy will give, with high probability
$1-p$, at least a certain gain~$\alpha$. 
\end{itemize}

Risk measures (Value-at-Risk, Conditional Value-at-Risk, etc.) could also be taken for aggregation \citep*{Follmer-Schied:2002}. 

\subsection{From planning towards adaptive solutions}

As we have seen, since the nineties, a certain number of
``scenario-based strategies'' have been proposed in the literature. The
common denominator of these approaches is the use of conditional
simulation (or any other simulation method), using the distribution law
of the orebody, to generate a set of representative scenarios of the
mine. Then, the solution is generally searched as a planning, that is an open-loop strategy.

A schematic way to represent the elaboration of a scenario-based strategy is the following
\begin{equation}
\mathcal{L} \xrightarrow{sample} (\uncertainty_j(\cdot))_{j\in\mathcal{J}} \xrightarrow{compute} \deci(\cdot)\ ,
\label{eq:schemascenario}
\end{equation}
that is, we sample the distribution law to obtain a set $\mathcal{J}$ of scenarios. Then, with one or another method, we use these scenarios to elaborate an open-loop decision sequence $\deci(\cdot)$.

We suggest that this approach may be extended in the spirit of the \emph{open-loop with feedback control} (OLFC) \citep*{Bertsekas:2000}. 
We do not detail the mathematics, but simply sketch the method.
In the probabilistic setting, we assume that the arrival of an observation at time $t$ allows us to update the conditional distribution $\mathcal{L}^t$ on 
the space $\Omega$ of scenarios, knowing past observations. Then, the
sketch is 
\begin{subequations}
\begin{equation*}
\mathcal{L}^0 \xrightarrow{sample} (\uncertainty_j(\cdot))_{j\in\mathcal{J}^1} \xrightarrow{compute} (\deci^1(1),\ldots,\deci^1(\horizon)) \xrightarrow{select} \deci^1(1)
\end{equation*}
\begin{equation*}
\hookrightarrow \mathcal{L}^1 \xrightarrow{sample} (\uncertainty_j(\cdot))_{j\in\mathcal{J}^2} \xrightarrow{compute} (\deci^1(1),\deci^2(2),\ldots,\deci^2(\horizon))\xrightarrow{select} \deci^2(2)
\end{equation*}
\begin{equation*}
\cdots
\end{equation*}
\begin{equation*}
\hookrightarrow \mathcal{L}^{\horizon-1} \xrightarrow{sample} (\uncertainty_j(\cdot))_{j\in\mathcal{J}^{\horizon}} \xrightarrow{compute} (\deci^1(1),\ldots,\deci^\horizon(\horizon)) \xrightarrow{select} \deci^{\horizon}(\horizon)
\end{equation*}
\label{eq:schemaolfc}
\end{subequations}
returning a closed-loop strategy $\deci(\cdot)$.

To end this section, let us stress the fact that index methods are well adapted to the uncertain case, where the index may be a function of 
the conditional distribution $\mathcal{L}^t$.

\section{Conclusions} \label{sec:conclusions}

We have presented the dynamic optimization approach to the open-pit
block scheduling problem, a relevant
problem in the mining industry that remains ellusive to be solved due
to its size. 
We have proposed heuristics based on so-called index strategies,
together with upper and lower bounds for the NPV.
Some of the results are promising, and index strategies are very fast
and scale well for large instances of mines. This encourages their use when one
generates a large number of scenarios, for which case a fast planning
simulation and NPV calculation is crucial.
In the future, we expect to do more experimentation on larger case studies
and other (more realistic) data sets, and to compare the results with
others found in the literature.

We have also introduced a general framework to deal with uncertainty and
dynamical learning. We expect to implement
this framework and to test it against real data.

\paragraph*{Acknowledgments.}

The authors thank the STIC-AmSud OVIMINE project for the financial support.
This paper was exposed at several OVIMINE meetings ---
2011, March 17-18, Lima, Per\'u,
2011, September 2-19, Paris, France,
2011, October 6-8, Lima, Per\'u,
2012, January 9, Valparaiso, Chile,
2012, November 8, Santiago, Chile ---
and we thank the participants for their comments.

\newcommand{\noopsort}[1]{} \ifx\undefined\allcaps\def\allcaps#1{#1}\fi

\appendix

\section{Integer linear programming formulation of the open-pit block scheduling problem}
\label{sec:combinatorial}

In this section we introduce the relevant notation and formulation
for the deterministic case of the open-pit block scheduling problem
using binary linear programming. 

\subsection{Modeling and notation}

We consider $\BLOCKSET$ the set of all blocks and $N=|\BLOCKSET|$. We
denote the elements of $\BLOCKSET$ (the blocks) with indices $i, j$,
unless otherwise stated. Similarly, we consider $T \in \IN$
time-periods and denote individual time-periods with $s, t = 1, 2,
\ldots, T$. $T$ is called the {\em time horizon}. We also use the
notation ${\bf T} = \{1, 2, \ldots, T\}$ for the set of time-periods.

Slope constraints are modeled as precedence constraints and encoded as
a set of {\em arcs} $\ARCSET \subset B \times B$, so $(i,j) \in
\ARCSET$ means that Block $j$ has to be extracted before Block $i$. We
say, in this case, that Block $j$ is a predecessor of Block $i$, which
in turn is a successor of $j$. Notice that arc $(i,j)$ goes from the
successor to the predecessor.

In this work we address a simplified version of the problem in which
the decision of the destination of the block is done beforehand. This
allows us to

\begin{enumerate}
\item 
consider that the net profit (which can be negative) of processing
Block $i$ is already known and noted as $v_i \in \IR$, and

\item 
define a set of resources $\RESSET$, and for Block $i \in 
\BLOCKSET$ and Resource $r \in \RESSET$ the quantity $a(i,r)$ of
resource $r$ that is used when $i$ is processed.
\end{enumerate}

For each time period $t$, upper and lower bounds on the consumption of
resource $r$ are given by the quantities $C^-_{rt} \in \{-\infty\}
\cup \IR$ and $C^+_{rt} \in \{+\infty\} \cup \IR$, respectively.

We also assume that the block is processed in the same time period in
which it is extracted from the mine (that is, we do not allow to stock
material for future processing). We also assume, as is usual in these
models, that all block extraction, handling and processing is done
within a time-period length.

While the modeling can be easily extended to the general
case, the heuristics presented in this article do not always work to
the case in which blending constraints apply, therefore, we assume
there are not such constraints.

Finally, Table~\ref{tbl:notation} summarizes the notation introduced
in this Appendix.

\begin{table} \begin{center}
  \begin{tabular}{|c|l|}
  \hline
  Symbol & Meaning \\
  \hline
  $\BLOCKSET$ & The set of blocks \\
  $i, j$ & Blocks (elements of $\BLOCKSET$) \\
  $s, t$ & Time-periods \\
  $T$ & Time horizon (number of periods) \\
  $\TSET$ & Set of time-periods \\
  $\ARCSET$ & Set of precedence arcs \\
  $\RESSET$ & Set of resources \\
  $v_i$ & Economic value (net profit) of Block $i$ \\
  $a(i,r)$ & Consumption of Resource $r$ by Block $i$ \\
  $C^-_{rt}, C^+_{rt}$ & Lower and upper bounds on resource $r$\\
  \hline
  \end{tabular}
  \caption{Main notations in the Appendix} \label{tbl:notation}
\end{center} \end{table}


A {\em block scheduling} is a function $\tau : \BLOCKSET \rightarrow \{1, 2,
\ldots, T, \infty \}$ where $\tau(i)$ is the time-period in which
block $i$ is extracted, hence, a block scheduling must satisfy the
precedence constraints, that is if $(i,j) \in \ARCSET$ then $\tau(i)
\ge \tau(j)$.

If $\tau$ is a block scheduling then the preimage sets $P_1 =
\tau^{-1}(1)$ and $P_t= P_{t-1} \cup \tau^{-1}(1)$ for $t>1$ are
called {\em pits}. We observe that $P_t \subset P_{t+1}$ hence we
say that the pits are {\em nested}.

A {\em block sequence} is a tuple $s=(s_1, s_2, \ldots, s_K) \in \BLOCKSET^K$ 
such that $k \neq \ell \Rightarrow s_k \neq s_{\ell}$ (all blocks in
the tuple are different) and that is compatible with the precedence
constraints, that is if $(s_k,s_\ell)=(i,j) \in \ARCSET$ then $\ell > k$ 
(predecessors appear before in the sequence).

\subsection{The binary programming formulation} \label{sec:problem}

The open-pit block scheduling problem is defined on the following
variables. For each $i \in \BLOCKSET, t=1, 2, \ldots, T$:

\[
  \byvar{it} = \left\{ \begin{array}{cl}
    1 & \textrm{block $i$ is extracted by time-period $t$}, \\
    0 & \textrm{otherwise.}
  \end{array}\right.
\]
Notice that the interpretation of variable $\byvar{it}$ is {\em by}
time-period, that is $\byvar{it} = 1$ if and only if block $i$ has been
extracted (and processed) at some period $s$ with $1 \le s \le t$. For
this reason, it is also useful to introduce the following auxiliary
variables for any $i\in \BLOCKSET$: $\atvar{i1} = \byvar{i1}$, and $\atvar{it} = \byvar{it} -
\byvar{i,t-1}$ for $t=2,3,\ldots,T$. We have that $\byvar{it} = \sum_{s=1}^t
\atvar{is}$ and $\atvar{it} = 1 $ if and only if block $i$ is
extracted exactly at time period $t$.

The optimization program is the following:
\begin{alignat}{4}
  (\OPBSP) \max \quad & 
    \sum_{t=1}^T \DISCOUNTRATE^t \sum_{i=1}^N v_i \atvar{it}
      \label{p:target} \\
  & \byvar{it} \le \byvar{jt} & 
    \quad (\forall (i,j) \in \ARCSET)(\forall t \in \TSET) 
      \label{p:precedences} \\
  & \byvar{i,t-1} \le \byvar{it} & \quad
    (\forall i \in \BLOCKSET)(\forall t=2,\ldots,T) 
      \label{p:mass} \\
  & \sum_{i} a(i,r) \atvar{it} \le C^+_{rt} &
    (\forall r \in \RESSET) (\forall t \in \TSET) 
      \label{p:resources+} \\
  & \sum_{i} a(i,r) \atvar{it} \ge C^-_{rt} &
    (\forall r \in \RESSET) (\forall t \in \TSET) 
      \label{p:resources-} \\
  & \byvar{it} \in \{0,1\} & 
    (\forall i \in \BLOCKSET) (\forall t \in \TSET) \; . \nonumber
\end{alignat}
Equation \eqref{p:target} presents the goal function, which is the
discounted value of extracted blocks over the time horizon $T$.
Equation \eqref{p:precedences} corresponds to the precedence
constraints given by the slope angle. Equation \eqref{p:mass} states
that blocks can be extracted only once. Finally, Equations
\eqref{p:resources+} and \eqref{p:resources-} fix the resource
consumption limits.

For a block model $\BLOCKSET$, precedence arcs $\ARCSET$, block
values $V = (v_i)_{i \in \BLOCKSET}$ and attribute matrix
$A=(a(i,r))_{i\in\BLOCKSET,r\in\RESSET}$ we will use the notation
$\OPBSP(\BLOCKSET,\ARCSET,v,A,T,\DISCOUNTRATE,C^+,C^-)$ to denote an
instance of the open-pit block scheduling problem for a certain time
horizon $T$, discount rate $\DISCOUNTRATE$, and resource limit
matrices $C^- = (C^-_{r,t})_{r,t}$ and $C^+ = (C^+_{r,t})_{r,t}$. We
will omit some of the parameters if the context allows it.
\end{document}